\begin{document}

\title{Multitask kernel-learning parameter prediction method 
for solving time-dependent linear systems}

\author[K.~Jiang et~al.]{Kai Jiang\affil{1}\comma\corrauth,
       Juan Zhang\affil{1}~and Qi Zhou\affil{1}}
\address{\affilnum{1}\ Key Laboratory of Intelligent Computing and Information Processing of Ministry of Education, Hunan key Laboratory for Computation and Simulation in Science and Engineering, School of Mathematics and Computational Science,
 Xiangtan University, Xiangtan, Hunan, China, 411105, P.R. China.}
\emails{{\tt kaijiang@xtu.edu.cn} (K.~Jiang), {\tt zhangjuan@xtu.edu.cn} (J.~Zhang), {\tt qizhou@smail.xtu.edu.cn} (Q.~Zhou)}

\begin{abstract}
Matrix splitting iteration methods play a vital role in solving large sparse linear systems.
Their performance heavily depends on the splitting parameters, however, 
the approach of selecting optimal splitting parameters has not been well developed.
In this paper, we present a multitask kernel-learning parameter prediction method 
to automatically obtain relatively optimal splitting parameters, which contains simultaneous multiple parameters prediction and a data-driven kernel learning.
For solving time-dependent linear systems, including 
linear differential systems and linear matrix systems, 
we give a new matrix splitting Kronecker product method, as well as its convergence analysis and preconditioning strategy. 
Numerical results illustrate our methods can save an enormous amount of time in selecting the relatively optimal splitting parameters compared with the exists methods. 
Moreover, our iteration method as a preconditioner can effectively accelerate GMRES. 
As the dimension of systems increases, all the advantages of our approaches becomes significantly.
Especially, for solving the differential Sylvester matrix equation, the speedup ratio can reach tens to hundreds of times when the scale of the system is larger than one hundred thousand. 
\end{abstract}
\ams{62F15, 62J05, 65F08, 65F45, 65M22}
\keywords{Multitask kernel-learning parameter prediction, Time-dependent linear systems, Matrix splitting Kronecker product method, Convergence analysis, Preconditioning.}
	
\maketitle

\section{Introduction}

In this paper, we consider the time-dependent linear systems (TDLSs) of the form
\begin{equation}\label{eq:time-dependent_linear_system}
	\dot{\bm{x}}(t)=\mathcal{L}\circ \bm{x}(t),\quad t\in [0,T],
\end{equation}
where $\bm{x}(t):[0,T]\to V$, $V$ is $\mathbb{R}^n$ ($n\in \mathbb{N}$), $\bm{x}(0)=\bm{x}_0\in V$ is an initial value, and $\mathcal{L}$ is a linear operator. TDLSs appear in many branches of science and engineering, such as dynamical systems, quantum mechanics, semi-discretization of partial differential equations, differential matrix equations, etc \cite{gustafsson1995time, cirant2019existence, schneider2006parallel, mclachlan2001conformal, hauck2013collision,  amato2013necessary}. 
A series of discrete methods suitable for them have been developed, 
such as linear multistep schemes, Runge-Kutta methods, general linear methods, block implicit methods, and boundary value methods (BVMs) \cite{wanner1996solving, griffiths2010numerical, butcher2016numerical, gander2006optimized, lee2013laplace, brugnano1998solving, briley1980structure}. 
After temporal discretization, each TDLS can be transformed into a sparse linear system  
\begin{equation*}\label{eq:system}
	Q\bm{x}=\bm{b},\quad Q\in\mathbb{R}^{n\times n}~~\text{is nonsingular and}~~\bm{b}\in \mathbb{R}^n.
\end{equation*}

For solving linear systems, matrix splitting iteration methods play an important role as either solvers or preconditioners. 
The classic matrix splitting forms are all based on $Q=M-N$, 
where $M$ is a nonsingular matrix such that a linear system with the coefficient matrix $M$ can easily be solved, such as Jacobi, Gauss-Seidel, and successive over-relaxation iteration methods \cite{varga1962iterative}. 
Alternating direction implicit (ADI) schemes can effectively improve the performance by alternately updating approximate solution. 
They were initially designed to solve partial differential equations~\cite{peaceman1955numerical, douglas1956numerical}, and were gradually extended to more branches, including numerical algebra and optimization~\cite{varga1962iterative, lions1979splitting}.
The typical schemes in numerical algebra are Hermitian and skew-Hermitian splitting type methods~\cite{bai2003hermitian, 2007On, wang2013positive, benner2009adi}. 
Further, a general ADI (GADI) framework has recently been developed to put most existing ADI methods into a unified framework~\cite{jiang2022general}.

Matrix splitting iteration methods require coefficient matrix $Q$ into different parts with splitting parameters.
The convergence and performance of them are very sensitive to splitting parameters, therefore, choosing the optimal splitting parameters is critical. 
There have been several approaches to selecting splitting parameters. 
Experimental traversal method is limited due to an unbearable computational cost, especially for large-scale systems.
A more common approach is using theoretical analysis to estimate the bound of splitting parameters in a case-by-case way~\cite{bai2003hermitian, chen2014splitting}, while its performance heavily depends on the theoretical bound and the scale of systems. 
Recently, a data-driven approach, Gaussian process regression (GPR) method~\cite{jiang2022general}, has presented to predict optimal splitting parameters by choosing an appropriate kernel.
GPR method can efficiently predict one splitting parameter at one time.

However, matrix splitting iteration methods can have two or more splitting parameters, among which there are complicated links.
Independently predicting each splitting parameter would inevitably affect the predicted accuracy of the original GPR method.
Therefore, it requires improving the GRP method to predict multi-parameters simultaneously.
Another critical component that determines GPR's availability is kernel function~\cite{wilson2015kernel, wilson2016deep}.
The original GRP method chooses kernel function by the problem's properties~\cite{jiang2022general}, which might produce an artificial error.
Moreover, the chosen kernel in a kind of problems may be difficult to extend to others. 
Therefore, automatically learning kernel functions based on distinct problems is still a challenge.

In this work, we propose a new splitting parameter selection method for matrix spitting iteration methods. Concretely, we present multitask kernel learning (MTKL) method for obtaining the optimal splitting parameters which contains simultaneous multiple splitting parameters prediction and a data-driven kernel learning. Furthermore, we propose a new matrix splitting Kronecker product (MSKP) method to solve TDLSs \eqref{eq:time-dependent_linear_system}. Its convergence analysis and preconditioning strategy are also given. We apply our developed methods to some standard TDLSs, including two-dimensional  diffusion and convection-diffusion equations, and a differential Sylvester matrix equation. 
Numerical results illustrate our methods can save an enormous amount of time in selecting the relatively optimal splitting parameters compared with the exists methods. 
Moreover, our iteration method as a preconditioner can effectively accelerate GMRES. 
Especially for solving the differential Sylvester matrix equation, the speedup ratio can reach tens to hundreds of times when the scale of the system is larger than one hundred thousand. 
As the dimension of the systems increases, all the advantages of our approaches becomes significantly.

The rest of this paper is organized as follows.  
\Cref{sec:MTKL} proposes the MTKL method. 
\Cref{sec:appl} presents the MSKP method.
\Cref{sec:rslts} shows the efficiency and superiority of our proposed methods by numerical experiments. 
In \Cref{sec:conclusion}, we carry out the summary and give an outlook of future work.

\section{MTKL method}
\label{sec:MTKL}

In this section, we propose the MTKL method to predict the relatively optimal splitting parameters. MTKL approach contains a multi-task GPR method for multiple splitting parameters and a data-driven kernel learning method.

\subsection{Multitask GPR}

Matrix splitting iteration methods can have two or more splitting parameters which heavily affect the efficiency. Now we propose a practical multitask GPR method to simultaneously predict these relatively optimal splitting parameters.
The multitask method learns $M$ related functions $f_l$  $(l=1,\cdots,M)$ from training data $\{(\bm{x}_{li},y_{li})~|~l=1,\cdots,M,~i=1,\cdots,n,~\bm{x}_{li}\in \mathbb{R}^d,~y_{li}\in \mathbb{R}\}$. 
In practice, we consider the following noised model to avoid singularity:
\begin{equation}\label{eq:gpr_noise}
	y_{li}=f_l(\bm x_{li})+\epsilon_l,\quad \epsilon_l \sim \mathcal{N}(0,~\sigma^2_l),
\end{equation}
where $y_{li}$ $(\bm{x}_{li})$ is the $i$-th output (input) of the $l$-th task, $\epsilon_l$ is the white noise of the $l$-th task.

Let $\bm{y}=[y_{11},\cdots,y_{1n},\cdots,y_{M1},\cdots,y_{Mn}]^T=\text{vec}(Y^T)$ be the output vector, and $\bm{f}=[f_1,\cdots,f_1,\cdots,$ $f_M,\cdots,f_M]^T$ $=\text{vec}(F^T)$ be the latent function.
The multitask regression problem can be presented as a Gaussian process  prior over the latent function
\begin{equation*}
	\bm{f}\sim GP(\bm 0,~K^t \otimes K^x),
\end{equation*}
where $K^t\in \mathbb{R}^{M\times M}$ and $K^x\in \mathbb{R}^{n\times n}$ are the task and data covariance matrices, respectively \cite{bonilla2007multi}. The noised model \eqref{eq:gpr_noise} becomes
\begin{equation*}
	\bm{y}\sim \mathcal{N}(\bm{0},~K^t \otimes K^x + D\otimes I_n),
\end{equation*}
where $D=\text{diag}[\sigma_1^2,\cdots,\sigma_M^2]\in \mathbb{R}^{M\times M}$. The prediction distribution for the $l$-th task $\bm y_{*l}|\bm y$ on a new point $\bm x_*$ is
\begin{equation*}
	\bm y_{*l}|\bm y \sim \mathcal{N}(\bm{\mu}_{*l},~\bm{\Sigma}_{*l}),
\end{equation*}
where
	\begin{equation*}
		\begin{aligned}
	\bm{\mu}_{*l}&=(k^t_l\otimes k^x_{\bm{x}, \bm{x}_*})^T\bm{\Sigma}^{-1}\bm{y},\\
	\bm{\Sigma}_{*l}&=k^t_{ll}k^x_{\bm{x}_*, \bm{x}_*}-(k^t_l\otimes k^x_{\bm{x}, \bm{x}_*})^T\bm{\Sigma}^{-1}(k^t_l\otimes k^x_{\bm{x}, \bm{x}_*}),
\end{aligned}
\end{equation*}
$\bm{\Sigma}=K^t\otimes K^x+D\otimes I_n$. $k^t_{ll}$ and $k^t_l$ are the $l$-th diagonal element and the $l$-th column of $K^t$, respectively. $k^x_{\bm{x}, \bm{x}_*}$ is a covariances vector between test point $\bm{x}_*$ and training point $\bm{x}$, and $K^x$ denotes the covariance matrix of all training points. Hyperparameters $\theta^t$ and $\theta^x$ appear both in the task and data covariance functions. To obtain the hyperparameter estimation, we apply L-BFGS method to maximize marginal likelihood function $L$ in logarithmic form
\begin{equation*}
	\begin{aligned}
		&L=\log p(\bm{y}|\bm{x}, \theta_x, \theta_t)\\
		&=-\frac{n}{2}\log |K^t|-\frac{M}{2}\log |K^x|-\frac{1}{2} \mbox{trace}\left[(K^t)^{-1}F^T(K^x)^{-1}F\right]\\
		&-\frac{n}{2}\sum_{l=1}^M\log \sigma^2_l-\frac{1}{2}\mbox{trace}\left[(Y-F)D^{-1}(Y-F)^T\right]-\frac{Mn}{2}\log 2\pi.
	\end{aligned}
\end{equation*}

\subsection{Kernel learning approach}

Choosing an appropriate kernel function determines whether GRP method succeeds or not.
The original GRP method directly selects kernel function from the feature of problems~\cite{jiang2022general}. 
It might result in an artificial error and could be hardly extended to other problems. 
For example, Figure \ref{fig:diff_gpr_curve_bad} shows that the GPR method with a manual kernel function will produce a wrong regression curve. 
How to give an automatic way to choose kernel functions is worth investigating. 
Here, we present a data-driven method to learn the kernel function. Give a kernel library
$\mathcal{K}=\{k_{\xi}(x,x')\,|~\xi=1,\cdots,N \}$ that contains basic kernel functions and their multiplicative combinations.
For the $l$-th ($l=1,\cdots,M$) training task, the required kernel function is
the linear combination of library elements
\begin{align*}
	k(x,x')= \sum_{\xi=1}^{N} c_{l\xi} k_{\xi}(x,x').
\end{align*}
For $N$ training tasks, the weighted matrix is
\begin{equation*}
C=\begin{bmatrix}
c_{11} & c_{12} & \cdots & c_{1N} \\
c_{21} & c_{22} & \cdots & c_{2N} \\
\vdots & \vdots & & \vdots \\
c_{M1} & c_{M2} & \cdots & c_{MN} \\
\end{bmatrix}.
\end{equation*}
All weights can be obtained by training the data from concrete TDLS. A similar idea can be also found in pattern discovery~\cite{wilson2014covariance, wilson2013gaussian}.

\begin{remark}
	In GPR method~\cite{jiang2022general}, the training data comes from smaller systems which is a small data set.
	Learning kernel function by a (deep) neural network from small data
	is difficult. Therefore, we predetermine the kernel library
	instead of directly learning kernel function from data.
\end{remark}

\section{Solving TDLSs}
\label{sec:appl}

In this section, we propose a new matrix splitting iteration method, the MSKP method, for solving the TDLSs \eqref{eq:time-dependent_linear_system}. We also give the convergence analysis and preconditioning strategy.

\subsection{Discretization of TDLSs}
\label{subsec:discretization}

Here, we use BVMs to discretize TDLS \eqref{eq:time-dependent_linear_system} in  temporal direction to obtain a sparse linear system. 
We first give a brief description of BVMs. More details can refer to \cite{brugnano1998solving, burrage1995parallel}. For $i= k_1, \cdots, m-k_2$ ($k=k_1+k_2$), the $k$-step BVM is
\begin{equation}\label{eq:bvm}
	\sum_{j=-k_1}^{k_2}\alpha_{k_1+j}x_{i+j}=\tau \sum_{j=-k_1}^{k_2}\beta_{k_1+j}\mathcal{L}x_{i+j},
\end{equation}
where $\tau=T/m$ is step size, $x_i\approx x(t_i)$, $t_i=i \tau$, and $\alpha_j,~\beta_j,~j=-k_1,\cdots,k_2$ are parameters. The extra $k_1-1$ initial and $k_2$ final equations are
\begin{subequations}\label{eq:bvm_initial}
\begin{align*}
	&\sum_{j=0}^{k}\alpha_j^{i}x_j=\tau \sum_{j=0}^k\beta^{i}_j\mathcal{L}x_{j},\quad i=1,\cdots,k_1-1,\\
	&\sum_{j=0}^k\alpha_j^{i}x_{m-k+j}=\tau \sum_{j=0}^{k}\beta^{i}_j\mathcal{L}x_{m-k+j},\quad i=m-k_2+1,\cdots,m,
\end{align*}
\end{subequations}
where the coefficients $\alpha_j^{i}$ and $\beta_j^{i}$ are chosen such that the truncation errors over all node are consistent.
Applying BVM \eqref{eq:bvm} leads to the discretization form of 
TDLS \eqref{eq:time-dependent_linear_system} 
\begin{equation}\label{eq:bvm_linear_system}
	Q\bm{x}:=(A-\tau B\mathcal{L})\bm{x}=\bm{b},
\end{equation}
where $\bm{x}=(x_0,\cdots,x_m)^T$, $\bm{b}=\bm{e}_1x_0$, $\bm{e}_1=(1,0,\cdots,0)^T\in \mathbb{R}^{m+1}$, the matrices $A,~B\in \mathbb{R}^{(m+1)\times (m+1)}$ have the following structures
\begin{equation*}
	A=\begin{bmatrix}
		\begin{array}{ccccccc}
			1 & 0 & \cdots & 0\\
			\alpha^1_0 & \alpha^1_1 & \cdots & \alpha^1_k\\
			\vdots & \vdots &  & \vdots \\
			\alpha^{k_1-1}_0 & \alpha^{k_1-1}_1 & \cdots & \alpha^{k_1-1}_k\\
			& \alpha_0 & \alpha_1 & \cdots & \alpha_k\\
			& & \ddots & \ddots & & \ddots\\
			& & & \alpha_0 & \alpha_1 & \cdots & \alpha_k\\
			& & & \alpha^{m-k_2+1}_0 & \alpha^{m-k_2+1}_1& \cdots &\alpha^{m-k_2+1}_k\\
			& & & \vdots & \vdots &  & \vdots\\
			& & & \alpha^{m}_0 & \alpha^{m}_1 & \cdots & \alpha^{m}_k\\
		\end{array}
	\end{bmatrix},
\end{equation*}

\begin{equation*}
	B=\begin{bmatrix}
		\begin{array}{ccccccc}
			0 & 0 & \cdots & 0\\
			\beta^1_0 & \beta^1_1 & \cdots & \beta^1_k\\
			\vdots & \vdots &  & \vdots \\
			\beta^{k_1-1}_0 & \beta^{k_1-1}_1 & \cdots & \beta^{k_1-1}_k\\
			& \beta_0 & \beta_1 & \cdots & \beta_k\\
			& & \ddots & \ddots & & \ddots\\
			& & & \beta_0 & \beta_1 & \cdots & \beta_k\\
			& & & \beta^{m-k_2+1}_0 & \beta^{m-k_2+1}_1& \cdots &\beta^{m-k_2+1}_k\\
			& & & \vdots & \vdots &  & \vdots\\
			& & & \beta^{m}_0 & \beta^{m}_1 & \cdots & \beta^{m}_k\\
		\end{array}
	\end{bmatrix}.
\end{equation*}

In this work, we mainly consider linear differential systems and linear matrix systems. If $\mathcal{L}$ is a differential operator on space, TDLS becomes a linear differential system
\begin{equation}\label{eq:linear_differential_system}
	\begin{cases}
		\dfrac{\partial u}{\partial t} =\mathcal{L}u+f,&\text{in} \ \Omega \times [0,T],\\
		u=g(t),&\text{on} \ \partial\Omega  \times [0,T],\\
		u(\cdot,0)=\psi,&\text{in} \ \Omega,
	\end{cases}
\end{equation}
where $\Omega \subset \mathbb{R}^d$ with $d\geq 1$ is a bounded and open domain. $\psi$ is the initial condition and $f$ is the source term. 
By proper spatial discretization, such as finite difference and finite element methods, we can obtain an ordinary differential equation
\begin{equation}\label{eq:ode}
	\begin{cases}
		M\dot{U}(t)=-KU(t)+F(t), &t\in [0,T],\\
		U(0)=\Psi,
	\end{cases}
\end{equation}
where $U(t)\in \mathbb{R}^{n^d}$ contains approximate values of $u(\cdot,t)$ over spatial grid nodes. $n^d$ is the degree of freedom of spatial discretization.
$F(t)$ and $\Psi$ are similar notations corresponding to $f$ and $\psi$. $M,~K\in \mathbb{R}^{n^d\times n^d}$ are mass and stiff matrices, respectively.

For linear matrix systems, we consider the differential Sylvester matrix equation which has the following form
\begin{equation}\label{eq:linear_matrix_system}
	\begin{cases}
		\dot{X}(t)=\mathcal{A}X(t)+X(t)\mathcal{B}+\mathcal{E}\mathcal{F}^T,\quad t\in [0, T],\\
		X(t_0)=X_0,
	\end{cases}
\end{equation}
where $X(t)\in\mathbb{R}^{n\times n}$ for each $t\in [0,T]$, $\mathcal{A},~\mathcal{B}\in \mathbb{R}^{n\times n}$, and $\mathcal{E},~\mathcal{F}\in \mathbb{R}^{n\times s}$ are full rank matrices with $s\ll n$. 
The initial condition is $X_0=Z_0\widetilde{Z}^T$, $Z_0,~\widetilde{Z}_0\in \mathbb{R}^{n\times s}$.
It originate from many specific problems, such as dynamical systems, filter design theory, model reduction problems, differential equations, and robust control problems \cite{abou2012matrix, corless2003linear}. 
The equivalent ordinary differential equation of \eqref{eq:linear_matrix_system} is
\begin{equation}\label{eq:ode_sylvester}
	\begin{cases}
		M\dot{\bm{x}}(t)=-K\bm{x}(t)+\bm{f}(t),\quad t\in [0, T],\\
		\bm{x}(0)=\text{vec}(X_0),
	\end{cases}
\end{equation}
where $M$ is an $n^2\times n^2$ identity matrix, $K=-(I_n\otimes \mathcal{A}+\mathcal{B}^T\otimes I_n)$, $\bm{x}(t)=\text{vec}(X(t))$, and $\bm{f}(t)=\text{vec}\left(\mathcal{E}\mathcal{F}^T\right)$.

Applying BVM to \eqref{eq:ode}, we obtain a linear system
\begin{equation}\label{eq:linear_system}
	Q\bm{u}=(A\otimes M+\tau B\otimes K)\bm{u}=\bm{b},
\end{equation}
where $\bm{u}=[\Psi,U_1,\cdots,U_m]^T$, $\bm{b}=\tau (B\otimes I_{n^d})\bm{f}+\bm{e}_1\otimes \Psi$, $\bm{f}=[F_0,F_1,\cdots,F_m]^T$.
And using the BVM to \eqref{eq:ode_sylvester}, the corresponding linear system can also be obtained.

\subsection{MSKP method}

In this subsection, we propose the MSKP method and also give a fast version by the properties of Kronecker product.
Let $A = (a_{ij})\in \mathbb{R}^{m \times n},~B\in \mathbb{R}^{p \times q}$, the Kronecker product of $A$ and $B$ is defined by
\begin{equation*}
A \otimes B =\begin{bmatrix}
a_{11}B & a_{12}B & \cdots & a_{1n}B \\
a_{21}B & a_{22}B & \cdots & a_{2n}B \\
\vdots & \vdots & \vdots & \vdots \\
a_{m1}B & a_{m2}B & \cdots & a_{mn}B
\end{bmatrix}.
\end{equation*}
Lemma \ref{lem:kp} gives required properties of Kronecker product. More context about the Kronecker product can refer to \cite{horn94}.
\begin{lemma}[\cite{horn94}]\label{lem:kp}
Let $A,C\in \mathbb{R}^{m\times m}$, $B,D\in \mathbb{R}^{n\times n}$, then\\
(1)$\quad(A+C)\otimes (B+D)=A\otimes B+ A\otimes D+C\otimes B+C\otimes D$;\\
(2)$\quad(A\otimes B)(C\otimes D)=(AC)\otimes (BD)$;\\
(3)$\quad(A\otimes B)^{-1}=A^{-1}\otimes B^{-1}$;\\
(4)$\quad(A\otimes B) \bm x=\rm{vec}(BXA^T),~\rm{vec}(X)=\bm x$;\\
(5)$\quad \sigma(A\otimes B)=\{\lambda\mu |\lambda\in \sigma(A),\mu \in \sigma(B)\}$.\\
\end{lemma}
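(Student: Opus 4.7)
The plan is to establish the five items in turn, leaning on the block-matrix definition of the Kronecker product given just above the lemma and using each earlier item to shorten the proof of the next.

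First I would prove (1) purely by inspection of the block structure. Writing $(A+C)\otimes(B+D)$ block-by-block, the $(i,j)$-block is $(a_{ij}+c_{ij})(B+D)$, which distributes as $a_{ij}B + a_{ij}D + c_{ij}B + c_{ij}D$; reassembling the four families of blocks gives the four summands on the right. For (2), I would compute $(A\otimes B)(C\otimes D)$ using block-row times block-column: the $(i,j)$-block of the product is $\sum_k (a_{ik}B)(c_{kj}D) = \bigl(\sum_k a_{ik}c_{kj}\bigr)BD = (AC)_{ij}\,BD$, which is exactly the $(i,j)$-block of $(AC)\otimes(BD)$. Item (3) is then immediate from (2): $(A\otimes B)(A^{-1}\otimes B^{-1}) = (AA^{-1})\otimes(BB^{-1}) = I_m\otimes I_n = I_{mn}$, and similarly on the other side, so $A^{-1}\otimes B^{-1}$ is the two-sided inverse.

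For (4) I would write $X = [\bm x_1,\ldots,\bm x_n]$ so that $\bm x = \mathrm{vec}(X) = [\bm x_1^T,\ldots,\bm x_n^T]^T$, and compute $(A\otimes B)\bm x$ block-row by block-row: the $i$-th block equals $\sum_j a_{ij}B\bm x_j = B\sum_j \bm x_j a_{ij} = B(X A^T)_{:,i}$, which is precisely the $i$-th block of $\mathrm{vec}(BXA^T)$. For (5), I would use Schur triangularizations $A = U T_A U^*$ and $B = V T_B V^*$ with $T_A,T_B$ upper triangular carrying the eigenvalues of $A,B$ on their diagonals. By (2) and (3), $A\otimes B = (U\otimes V)(T_A\otimes T_B)(U\otimes V)^{-1}$, and $T_A\otimes T_B$ is upper triangular whose diagonal entries are exactly all products $\lambda_i\mu_j$ with $\lambda_i\in\sigma(A)$, $\mu_j\in\sigma(B)$; taking the spectrum of both sides yields the claim.

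None of the steps is genuinely hard, since each identity reduces to direct manipulation of block matrices; the only place that needs a little care is (5), where one must justify that the Schur form of $A\otimes B$ can be read off from the Schur forms of $A$ and $B$. The main conceptual obstacle is keeping the indexing of blocks and the vectorization convention consistent throughout, especially in (4), where the appearance of $A^T$ (rather than $A$) depends on column-major vectorization. Since the lemma is quoted from \cite{horn94}, the proof proposal can be kept at the sketch level and cite that reference for the full details.
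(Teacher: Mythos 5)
Your sketch is correct and is precisely the standard argument from \cite{horn94}; the paper states \Cref{lem:kp} without proof, simply citing that reference, so there is no alternative route in the paper to compare against, and your block-by-block verifications of (1), (2), (4), the two-sided-inverse deduction of (3) from (2), and the Schur-form argument for (5) are all sound. The only nit is in item (4): with $A\in\mathbb{R}^{m\times m}$ and $B\in\mathbb{R}^{n\times n}$ the matrix $X$ must be $n\times m$, i.e.\ it has $m$ columns rather than $n$ as written, but the computation $\sum_j a_{ij}B\,x_j = B(XA^T)_{:,i}$ is otherwise exactly right.
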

Here, the `$\text{vec}$' operator transforms matrices into vectors by stacking columns
\begin{equation*}
	X=[\bm x_1,\bm x_2,\cdots,\bm x_m]\in \mathbb{R}^{n\times m} \Longleftrightarrow \text{vec}(X)=[\bm x_1^T,\bm x_2^T,\cdots,\bm x_m^T]^T\in \mathbb{R}^{nm}.
\end{equation*}
Based on the properties of Kronecker product, we give the derivation process of MSKP scheme. 
First, we introduce generalized Kronecker product splitting (GKPS) method~\cite{chen2015generalized}
\begin{subequations}\label{eq:gkps}
	\begin{align}
		&((A+\alpha B)\otimes M)\bm u^{(k+\frac{1}{2})}=\bm b-(B\otimes(\tau K-\alpha M))\bm u^{(k)},\label{eq:gkpsa}\\
		&(B\otimes(\tau K+\beta M))\bm u^{(k+1)}=\bm b-((A-\beta B)\otimes M)\bm u^{(k+\frac{1}{2})}.\label{eq:gkpsb}
	\end{align}
\end{subequations}
Inspired by GADI framework \cite{jiang2022general}, we add a `viscosity' term to the right side of \eqref{eq:gkpsb}
\begin{equation*}
	\begin{aligned}
		&\bm b-(A-\beta B)\otimes M\bm u^{(k+\frac{1}{2})}+\frac{\alpha+\beta}{2} \omega B\otimes M(\bm u^{(k)}-\bm u^{(k+\frac{1}{2})})\\
		=&\bm b-(A-\beta B)\otimes M\bm u^{(k+\frac{1}{2})}+\frac{\alpha+\beta}{2}\omega B\otimes M\bm u^{(k)}-\frac{\alpha+\beta}{2}\omega B\otimes M\bm u^{(k+\frac{1}{2})}\\
		=&\bm b-(A+\alpha B)\otimes M\bm u^{(k+\frac{1}{2})}+\frac{\alpha+\beta}{2}\omega B\otimes M\bm u^{(k)}+\frac{\alpha+\beta}{2}(2-\omega) B\otimes M\bm u^{(k+\frac{1}{2})}\\
		=&B\otimes(\tau K-\alpha M)\bm u^{(k)}+\frac{\alpha+\beta}{2}\omega B\otimes M\bm u^{(k)}+\frac{\alpha+\beta}{2}(2-\omega)B\otimes M\bm u^{(k+\frac{1}{2})}\\
		=&B\otimes(\tau K+(\frac{\alpha+\beta}{2}\omega-\alpha)M)\bm u^{(k)}+\frac{\alpha+\beta}{2}(2-\omega)B\otimes M\bm u^{(k+\frac{1}{2})}.\\
	\end{aligned}
\end{equation*}
Then we obtain the MSKP scheme
\begin{equation}\label{eq:gadikp}
	\begin{cases}
		(A+\alpha B)\otimes M\bm u^{(k+\frac{1}{2})}=&\bm b-B\otimes(\tau K-\alpha M)\bm u^{(k)},\\
		B\otimes(\tau K+\beta M)\bm u^{(k+1)}=&B\otimes \Big(\tau K+\big(\dfrac{\alpha+\beta}{2}\omega-\alpha \big)M \Big)\bm u^{(k)}\\
		  & + \dfrac{\alpha+\beta}{2}(2-\omega) B\otimes M \bm u^{(k+\frac{1}{2})},
	\end{cases}
\end{equation}
where $k=0,1,\cdots$, the splitting parameters $\alpha,~\beta>0$ and $0\leq \omega<2$. Note that, when $\omega=0$, the MSKP method reduces to the GKPS scheme~\cite{chen2015generalized}, when $\omega=0$ and $\alpha=\beta$, the MSKP method becomes the Kronecker product splitting (KPS) approach~\cite{chen2014splitting}.

Eliminating the intermediate vector $\bm u^{(k+\frac{1}{2})}$, we can rewrite the MSKP method in a fixed point form
\begin{equation*}
	\bm u^{(k+1)}=T(\alpha,\beta,\omega)\bm u^{(k)}+P(\alpha,\beta,\omega)^{-1}\bm b,
\end{equation*}
where the iteration matrix is
\begin{equation}\label{eq:gadikp_iteration_matrix}
	\begin{aligned}
		&T(\alpha,\beta,\omega)
		=[(A+\alpha B)\otimes(\tau K+\beta M)]^{-1}\\
		&\cdot\bigg[(A-\beta B)\otimes (\tau K-\alpha M)+(A+\alpha B)\otimes \frac{\alpha+\beta}{2}\omega M+\frac{\alpha+\beta}{2}\omega B\otimes (\tau K-\alpha M)\bigg],
	\end{aligned}
\end{equation}
 and
\begin{equation}\label{eq:gadikp_preconditioner}
	P(\alpha,\beta,\omega)=\frac{2}{(\alpha+\beta)(2-\omega)}(A+\alpha B)\otimes(\tau K+\beta M).
\end{equation}
Obviously, there is a unique splitting
\begin{equation*}
	Q=P(\alpha,\beta,\omega)-R(\alpha,\beta,\omega),
\end{equation*}
where
\begin{equation*}
	\begin{aligned}
		R(\alpha,\beta,\omega)=&\frac{2}{(\alpha+\beta)(2-\omega)}\\
		&\cdot \bigg[(A-\beta B)\otimes (\tau K-\alpha M)+(A+\alpha B)\otimes \frac{\alpha+\beta}{2}\omega M+\frac{\alpha+\beta}{2}\omega B\otimes (\tau K-\alpha M)\bigg].
	\end{aligned}
\end{equation*}
Note that
\begin{equation}\label{eq:gadikp_TrelateP}
	T(\alpha,\beta,\omega)=P(\alpha,\beta,\omega)^{-1}R(\alpha,\beta,\omega)=I-P(\alpha,\beta,\omega)^{-1}Q.
\end{equation}
Therefore, the MSKP method \eqref{eq:gadikp} can become
\begin{equation*}
	\bm u^{(k+1)}=\bm u^{(k)}+P(\alpha,\beta,\omega)^{-1}\bm r^{(k)},\quad \bm r^{(k)}=\bm b-Q\bm u^{(k)}.
\end{equation*}

Using the properties of Kronecker product, we can obtain a fast implementation of MSKP method (see Algorithm \ref{alg:gadigkp}).
\begin{algorithm}
	\caption{MSKP method for solving linear system \eqref{eq:linear_system}.}
	\label{alg:gadigkp}
	\KwData{$\bm u^{(0)}\in \mathbb{R}^{n^dm}$, $\alpha, \beta>0$,  $0\leq \omega<2$}
	Calculate $\tau K+\beta M$ and $(A+\alpha B)^{-T}$\\
		\For{$k=0,1,2,\cdots$ until $\{\bm u^{(k)}\}$ converges}{
		$\bm r^{(k)}=\frac{(\alpha+\beta)(2-\omega)}{2}(\bm b-Q\bm u^{(k)})$\\
		$\text{vec}([\bm r^{(k)}_1,\bm r^{(k)}_2,\cdots,\bm r^{(k)}_m])=\bm r^{(k)}$\\	
		\For {$i=1,2,\cdots,m$}{
		$(\tau K+\beta M)\bm v^{(k)}_i \approx \bm r^{(k)}_i$ (use GMRES)
		}	
		$\bm v^{(k)}=\text{vec}\left([\bm v^{(k)}_1,\bm v^{(k)}_2,\cdots,\bm v^{(k)}_m]\cdot(A+\alpha B)^{-T}\right)$\\
		$\bm u^{(k+1)}=\bm u^{(k)}+\bm v^{(k)}$
		}
\end{algorithm}

\begin{remark}
Here we give a computational complexity analysis on MSKP method. For each iteration,
directly solving will result in $O\left((n^dm)^3\right)$, while Algorithm \ref{alg:gadigkp} only needs $O\left((n^dm)^2+n^dm^2+n^dm\right)$.
\end{remark}

\subsection{Convergence analysis of MSKP method}

In this subsection, we analyze the convergence of MSKP method. Let $\sigma(A)$ and $\rho(A)$ be the spectral set and the spectral radius of $A$, respectively. 

\begin{thm}\label{thm:convergence_of_gadikp}
	Assume that all eigenvalues of $B^{-1}A$ and  $M^{-1}K$
	have positive and nonnegative real parts, respectively. Then for any $\alpha>0$, $0<\beta\leq \tau \min_{\xi\in \sigma(M^{-1}K)}$ $Re(\xi)$ and $\omega\in [0,2)$, the MSKP method \eqref{eq:gadikp} converges to the unique solution $u^*$ of the linear system \eqref{eq:linear_system}. The spectral radius $\rho(T(\alpha,\beta,\omega))$ of iteration matrix $T(\alpha,\beta,\omega)$ satisfies
	\begin{equation*}
		\rho(T(\alpha,\beta,\omega))\leq\frac{1}{2}[(2-\omega)\varphi(\alpha,\beta)+\omega]<1,
	\end{equation*}
	where
	\begin{equation}\label{eq:gamma_alpha}
		\varphi(\alpha,\beta)=\max_{\lambda\in \sigma(B^{-1}A+\frac{\alpha-\beta}{2} I_m)}\left|\frac{\lambda-(\alpha+\beta)/2}{\lambda+(\alpha+\beta)/2}\right|.
	\end{equation}
\end{thm}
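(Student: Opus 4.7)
The plan is to exploit the multiplicative Kronecker structure of the iteration matrix together with a convex-combination identity in $\omega$. Setting $\gamma:=(\alpha+\beta)/2$ and $T_0:=T(\alpha,\beta,0)$, the first and essential algebraic step is to establish
\begin{equation*}
T(\alpha,\beta,\omega)=\tfrac{\omega}{2}\,I+\tfrac{2-\omega}{2}\,T_0,\qquad T_0=\bigl[(A+\alpha B)^{-1}(A-\beta B)\bigr]\otimes\bigl[(\tau K+\beta M)^{-1}(\tau K-\alpha M)\bigr].
\end{equation*}
To verify this I would expand the bracket in \eqref{eq:gadikp_iteration_matrix}, use the rewrites $(A+\alpha B)^{-1}(A-\beta B)=I-(\alpha+\beta)(A+\alpha B)^{-1}B$ and $(\tau K+\beta M)^{-1}(\tau K-\alpha M)=I-(\alpha+\beta)(\tau K+\beta M)^{-1}M$, and collect like tensor terms; the $\gamma\omega$-weighted contributions combine so that everything other than $\tfrac{\omega}{2}I$ and $\tfrac{2-\omega}{2}T_0$ cancels exactly.

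Once the identity is in hand, the spectrum of $T(\alpha,\beta,\omega)$ is the image of $\sigma(T_0)$ under the affine map $z\mapsto \tfrac{\omega}{2}+\tfrac{2-\omega}{2}z$, whence the triangle inequality gives $\rho(T(\alpha,\beta,\omega))\le \tfrac{\omega}{2}+\tfrac{2-\omega}{2}\rho(T_0)$. The task reduces to showing $\rho(T_0)\le\varphi(\alpha,\beta)$. By Lemma~\ref{lem:kp}(5), $\sigma(T_0)$ consists of products $\lambda_\eta\mu_\xi$, where $\lambda_\eta$ runs over the spectrum of the first tensor factor and $\mu_\xi$ over that of the second. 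Factoring $B$ and $M$ to the left gives $\lambda_\eta=(\eta-\beta)/(\eta+\alpha)$ with $\eta\in\sigma(B^{-1}A)$ and $\mu_\xi=(\tau\xi-\alpha)/(\tau\xi+\beta)$ with $\xi\in\sigma(M^{-1}K)$. The affine shift $\lambda:=\eta+(\alpha-\beta)/2$ rewrites $|\lambda_\eta|$ as $|\lambda-\gamma|/|\lambda+\gamma|$, which is exactly the quantity maximized in the definition of $\varphi(\alpha,\beta)$, so $\max_\eta|\lambda_\eta|=\varphi(\alpha,\beta)$.

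It remains to show $|\mu_\xi|\le 1$ for every $\xi\in\sigma(M^{-1}K)$. Writing $\xi=a+bi$ with $a=Re(\xi)\ge 0$, a direct computation yields
\begin{equation*}
|\tau\xi+\beta|^2-|\tau\xi-\alpha|^2=(\alpha+\beta)\bigl(2\tau a+\beta-\alpha\bigr),
\end{equation*}
so $|\mu_\xi|\le 1$ is equivalent to $2\tau a+\beta-\alpha\ge 0$. This is precisely where the hypothesis $0<\beta\le \tau\min_{\xi} Re(\xi)$ is used, supplying $\tau a\ge\beta$ uniformly in the admissible parameter range and closing the inequality. Combining the two factor bounds yields $\rho(T_0)\le \varphi(\alpha,\beta)$, and feeding this back into the convex-combination estimate gives the advertised bound $\rho(T(\alpha,\beta,\omega))\le \tfrac{1}{2}\bigl[(2-\omega)\varphi(\alpha,\beta)+\omega\bigr]$; strictness $<1$ is then inherited from $\varphi(\alpha,\beta)<1$, which is forced by the strict positivity of $Re(\eta)$, together with $\omega/2<1$ for $\omega\in[0,2)$.

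I expect the main obstacle to be the first algebraic identity: it is the structural heart of the argument and requires careful tracking of the three Kronecker terms weighted by $\gamma\omega$ so that their contributions collapse into a clean convex combination of $I$ and $T_0$. Once that identity is established, the remainder is routine Kronecker-product spectral accounting. A secondary delicate point is the control of $|\mu_\xi|$: one must verify that the stated hypothesis on $\beta$ is strong enough uniformly in $\alpha$ within the admissible range, and if necessary make explicit any parameter interplay between $\alpha$, $\beta$, and $\min_\xi Re(\xi)$ that the argument implicitly uses.
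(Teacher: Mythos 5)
Your proposal follows essentially the same route as the paper, just organized at the matrix level instead of the eigenvalue level, and your organization is the cleaner one. The paper does not prove your identity $T(\alpha,\beta,\omega)=\tfrac{\omega}{2}I+\tfrac{2-\omega}{2}T_0$; it instead directly asserts that the eigenvalues of $T(\alpha,\beta,\omega)$ are $p=\bigl[(\tilde\lambda-\beta)(\tilde\mu-\alpha)+\tfrac{\alpha+\beta}{2}\omega(\tilde\lambda+\alpha)+\tfrac{\alpha+\beta}{2}\omega(\tilde\mu-\alpha)\bigr]/\bigl[(\tilde\lambda+\alpha)(\tilde\mu+\beta)\bigr]$ with $\tilde\lambda\in\sigma(B^{-1}A)$, $\tilde\mu\in\sigma(\tau M^{-1}K)$, and then massages this scalar expression (via $\alpha=\hat\alpha+\hat\beta$, $\beta=\hat\alpha-\hat\beta$) into $p=\tfrac12[(2-\omega)q+\omega]$ with $q=\tfrac{\lambda-\hat\alpha}{\lambda+\hat\alpha}\cdot\tfrac{\mu-\hat\alpha}{\mu+\hat\alpha}$ --- exactly your convex combination, obtained per eigenvalue. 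Your operator identity does hold (with $S_1=(A+\alpha B)^{-1}(A-\beta B)$ and $S_2=(\tau K+\beta M)^{-1}(\tau K-\alpha M)$, the substitutions $(A+\alpha B)^{-1}B=\tfrac{1}{\alpha+\beta}(I-S_1)$ and $(\tau K+\beta M)^{-1}M=\tfrac{1}{\alpha+\beta}(I-S_2)$ collapse the two $\tfrac{\alpha+\beta}{2}\omega$-weighted Kronecker terms to $\tfrac{\omega}{2}(I-S_1\otimes S_2)$), and together with Lemma~\ref{lem:kp}(5) it actually justifies the eigenvalue formula that the paper only asserts. The remaining steps --- the shift $\lambda=\eta+\tfrac{\alpha-\beta}{2}$ identifying $\max_\eta|(\eta-\beta)/(\eta+\alpha)|$ with $\varphi(\alpha,\beta)$, and discarding the second factor via $|\mu_\xi|\le1$ --- coincide with the paper's.

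On the ``delicate point'' you flag: your suspicion is justified, and the paper does not resolve it. Your computation $|\tau\xi+\beta|^2-|\tau\xi-\alpha|^2=(\alpha+\beta)(2\tau a+\beta-\alpha)$ is correct, but the hypotheses $\alpha>0$ and $0<\beta\le\tau\min_\xi Re(\xi)$ only give $2\tau a+\beta-\alpha\ge 3\beta-\alpha$, so $|\mu_\xi|\le1$ is guaranteed only when $\alpha\le 2\tau\min_\xi Re(\xi)+\beta$, not for arbitrary $\alpha>0$ as the theorem claims. The paper's proof simply states $\tfrac{\mu-\hat\alpha}{\mu+\hat\alpha}\le1$ without this computation, so the gap is in the source rather than in your argument. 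The same caveat attaches to the final strictness: $\varphi(\alpha,\beta)<1$ requires $2Re(\eta)+\alpha-\beta>0$ for every $\eta\in\sigma(B^{-1}A)$, which positivity of $Re(\eta)$ alone guarantees only when $\beta\le\alpha+2\min_\eta Re(\eta)$; the paper again asserts $Re(\lambda)>0$ without comment. A watertight version of the theorem needs these additional restrictions on the admissible $(\alpha,\beta)$ made explicit.
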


\begin{proof}
	Let $\tilde\lambda$ and $\tilde\mu$ be the eigenvalues of matrices $B^{-1}A$ and $\tau M^{-1}K$, respectively. By the property of Kronecker product, the eigenvalues of iteration matrix $T(\alpha,\beta,\omega)$ \eqref{eq:gadikp_iteration_matrix} have the following form
	\begin{equation*}
		p=\frac{(\tilde\lambda-\beta)(\tilde\mu-\alpha)+(\alpha+\beta)\omega(\tilde\lambda+\alpha)/2+\alpha+\beta\omega(\tilde\mu-\alpha)/2}{(\tilde\lambda+\alpha)(\tilde\mu+\beta)}.
	\end{equation*}
	Let $\alpha=\hat\alpha+\hat\beta$ and $\beta=\hat\alpha-\hat\beta$,
	the above equation becomes
	\begin{equation*}
	\scriptsize{
		p=\frac{(\tilde\lambda-\hat\alpha+\hat\beta)(\tilde\mu-\hat\alpha-\hat\beta)+\hat\alpha\omega(\tilde\lambda+\hat\alpha+\hat\beta)+\hat\alpha\omega(\tilde\mu-\hat\alpha-\hat\beta)}{(\tilde\lambda+\hat\alpha+\hat\beta)(\tilde\mu+\hat\alpha-\hat\beta)},}
	\end{equation*}
	which is equivalent to
	\begin{equation*}
		\begin{aligned}
			p=\frac{(\lambda-\hat\alpha)(\mu-\hat\alpha)+\hat\alpha\omega(\lambda+\hat\alpha)+\hat\alpha\omega(\mu-\hat\alpha)}{(\lambda+\hat\alpha)(\mu+\hat\alpha)}=\frac{(\lambda-\hat\alpha)(\mu-\hat\alpha)+\hat\alpha\omega(\lambda+\mu)}{(\lambda+\hat\alpha)(\mu+\hat\alpha)},
		\end{aligned}	
	\end{equation*}
	where $\lambda\in \sigma(B^{-1}A+\hat\beta I_m)$, $\mu\in \sigma(\tau M^{-1}K-\hat\beta I_m)$.

	Note that
	\begin{equation*}
		\begin{aligned}
			2p=&2\frac{\lambda-\hat\alpha}{\lambda+\hat\alpha}\cdot \frac{\mu-\hat\alpha}{\mu+\hat\alpha}+\omega\frac{2\hat\alpha(\lambda+\mu)}{(\lambda+\hat\alpha)(\mu+\hat\alpha)}\\
			=&(2-\omega)\frac{\lambda-\hat\alpha}{\lambda+\hat\alpha}\cdot \frac{\mu-\hat\alpha}{\mu+\hat\alpha}+\omega\frac{\lambda+\hat\alpha}{\lambda+\hat\alpha}\cdot \frac{\mu+\hat\alpha}{\mu+\hat\alpha}\\
			=&(2-\omega)\frac{\lambda-\hat\alpha}{\lambda+\hat\alpha}\cdot \frac{\mu-\hat\alpha}{\mu+\hat\alpha}+\omega.
		\end{aligned}
	\end{equation*}
	Denote $q=\dfrac{\lambda-\hat\alpha}{\lambda+\hat\alpha}\cdot \dfrac{\mu-\hat\alpha}{\mu+\hat\alpha}$, 
	\begin{equation*}
		p=\frac{1}{2}[(2-\omega)q+\omega].
	\end{equation*}
	Let $q=a+bi$ ($i=\sqrt{-1}$), 
	\begin{equation}\label{eq:pq}
		\begin{aligned}
			|p|=&\frac{1}{2}\left|(2-\omega)(a+bi)+\omega\right|\\
			=&\frac{1}{2}\sqrt{(2-\omega)^2a^2+\omega^2+2(2-\omega)\alpha\omega+(2-\omega)^2b^2}\\
			\leq &\frac{1}{2}\sqrt{(2-\omega)^2(a^2+b^2)+\omega^2+2(2-\omega)\omega\sqrt{a^2+b^2}}\\
			=&\frac{1}{2}\sqrt{(2-\omega)^2|q|^2+\omega^2+2(2-\omega)\omega|q|}\\
			=&\frac{1}{2}[(2-\omega)|q|+\omega].\\
		\end{aligned}
	\end{equation}
	If $0<\beta\leq \tau \min_{\xi\in \sigma(M^{-1}K)}Re(\xi)$ and $\alpha>0$, then
	\begin{equation*}
		\frac{\mu-\hat\alpha}{\mu+\hat\alpha}\leq 1,\quad \mu \in \sigma(\tau M^{-1}K-\hat\beta I_m).
	\end{equation*}
	When $0\leq\omega<2$, $Re(\lambda)>0$, 
	\begin{equation*}
		\begin{aligned}
			\rho(T(\alpha,\beta,\omega))
			=&\max_{\lambda\in \sigma(B^{-1}A+\hat\beta I_m)\atop \mu \in \sigma(\tau M^{-1}K-\hat\beta I_m)}|p|
			\leq \max_{\lambda\in \sigma(B^{-1}A+\hat\beta I_m)\atop \mu \in \sigma(\tau M^{-1}K-\hat\beta I_m)}\frac{1}{2}[(2-\omega)|q|+\omega]\\
			\leq & \frac{1}{2}\left[(2-\omega)\max_{\lambda\in \sigma(B^{-1}A+\hat\beta I_m)}\left|\frac{\lambda-\hat\alpha}{\lambda+\hat\alpha}\right|+\omega \right]\\
			=&\frac{1}{2}\left[(2-\omega)\max_{\lambda\in \sigma(B^{-1}A+\frac{\alpha-\beta}{2} I_m)}\left|\frac{\lambda-(\alpha+\beta)/2}{\lambda+(\alpha+\beta)/2}\right|+\omega \right]\\
			=&\frac{1}{2}[(2-\omega)\varphi(\alpha,\beta)+\omega]<1.
		\end{aligned}
	\end{equation*}	
	It implies that the MSKP method \eqref{eq:gadikp} converges to the unique solution $u^*$ of linear system \eqref{eq:linear_system}.
\end{proof}

\subsection{Accelerating GMRES by MSKP preconditioner}

Based on MSKP method, we give a preconditioning strategy to GMRES. From \eqref{eq:gadikp_TrelateP}, the linear system \eqref{eq:linear_system} is equivalent to
\begin{equation}\label{eq:linear_system_gadikp}
	(I-T(\alpha,\beta,\omega))\bm u=P(\alpha,\beta,\omega)^{-1}Q\bm u=\bm c,
\end{equation}
where $\bm c=P(\alpha,\beta,\omega)^{-1}\bm b$ and $P(\alpha,\beta,\omega)$ is a preconditioner. This equivalent system can be solved by GMRES \cite{saad1986gmres}. 
Algorithm \ref{alg:gmres_gadikp} gives the solving process for \eqref{eq:linear_system} by putting $P(\alpha,\beta,\omega)$ as a preconditioner in GMRES. 
Note that using the MSKP preconditioner within GMRES requires solving a linear system $P(\alpha,\beta,\omega)\bm v=\bm r$ at each iteration, 
which can be economically solved by Algorithm \ref{alg:gadigkp}. 

\begin{algorithm}
	\caption{GMRES-MSKP method for solving linear system \eqref{eq:linear_system}.}
	\label{alg:gmres_gadikp}
	\KwData{$\bm u^{(0)}\in \mathbb{R}^{n^dm}$, $\alpha,~\beta>0,~0\leq\omega<2$}
	Calculate preconditioner $P(\alpha,\beta,\omega)$ according to \eqref{eq:gadikp_preconditioner}\\
	\For{$k=1,2,\cdots$ until $\{\bm u_k\}$ converges}{
	Calculate $\bm r_0=P(\alpha,\beta,\omega)^{-1}(\bm b-Q\bm u_0)$ and $\bm v_1=\bm r_0/\|\bm r_0\|_2$\\
	\For{$j=1,2,\cdots,k$}{
	$\widetilde{\bm w}_j=Q\bm v_j$\\
	$\bm w_j=P(\alpha,\omega)^{-1}\widetilde{\bm w}_j$\\
	\For{$i=1,2,\cdots,j$}{
	$h_{ij}=(\bm w_j,\bm v_i)$
	}
	$\widehat{\bm v}_{j+1}=\bm w_j-\sum_{i=1}^jh_{i,j}\bm v_i$\\
	$h_{j+1,j}=\|\widehat{\bm v}_{j+1}\|_2$\\
	$\bm v_{j+1}=\widehat{\bm v}_{j+1}/h_{j+1,j}$
	}
	$\bm u_k=\bm u_0+V_k\bm y_k$, where $\bm y_k$ is the solution of $\min_y \|\|\bm r_0\|_2\bm e_1-H_k\bm y\|_2$
	}
\end{algorithm}

Assume that all the eigenvalues of $B^{-1}A$ and $M^{-1}K$ have positive and nonnegative real parts, respectively. $\alpha>0$, $0<\beta\leq \tau \min_{\xi\in \sigma(M^{-1}K)}Re(\xi)$ and $0\leq\omega<2$. Since $\rho(P(\alpha,\beta,\omega)^{-1}Q)=\rho(I-T(\alpha,\beta,\omega))\leq1$, we can conclude that all the eigenvalues of the preconditioned matrix $P(\alpha,\beta,\omega)^{-1}Q$ are located in a circle with radius $1$. Compared with KPS and GKPS preconditioners (see Figure \ref{fig:eig_dist}), MSKP preconditioner can further reduce the condition number of coefficient matrix $Q$, and the eigenvalue distribution of preconditioned system has a more tighter bound.
\begin{figure}[!hbpt]
\centering
	\subfigure[No preconditioner] {\includegraphics[width=3.6cm]{./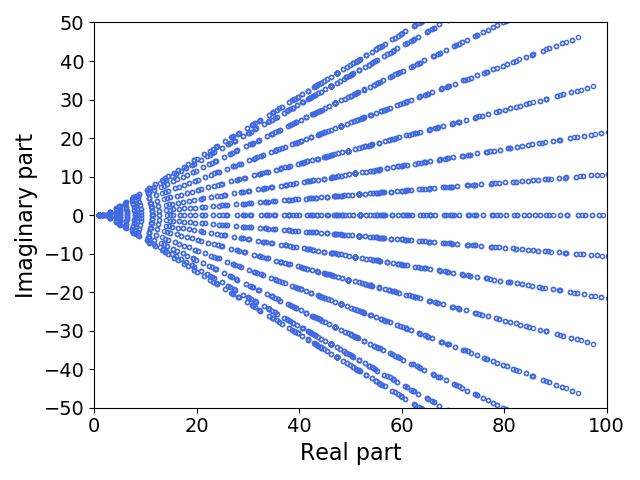}}
	\subfigure[KPS preconditioner] {\includegraphics[width=3.6cm]{./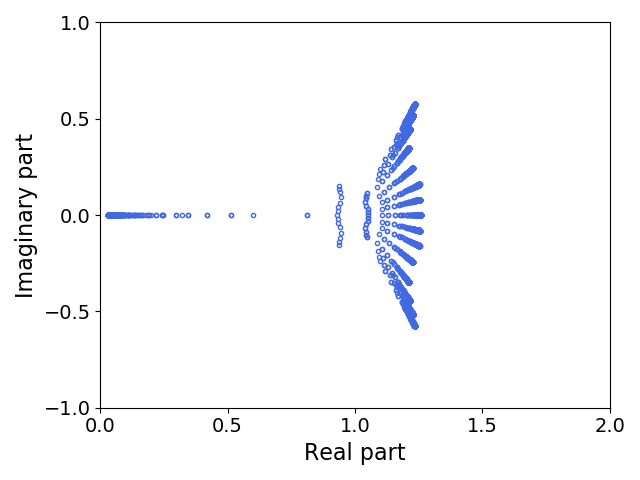}}
	\subfigure[GKPS preconditioner] {\includegraphics[width=3.6cm]{./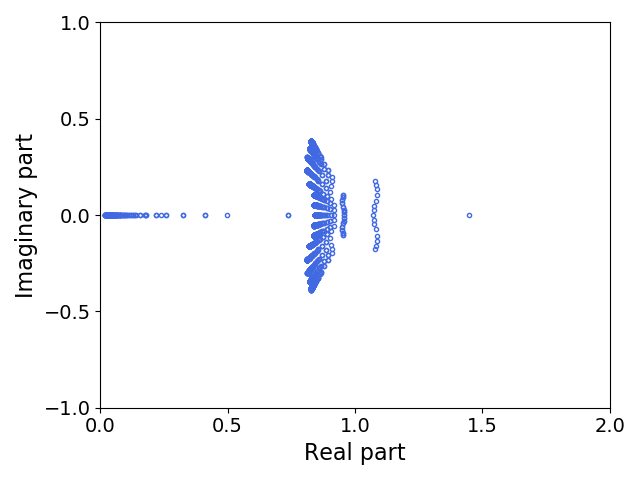}}
	\subfigure[MSKP preconditioner] {\includegraphics[width=3.6cm]{./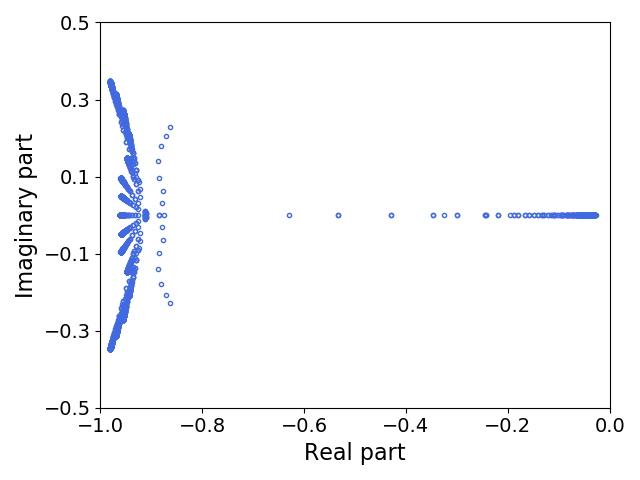}}
	\caption{The eigenvalue distribution of $Q$ with different preconditioners. $Q$ is the coefficient matrix generated by discretizing 2D diffusion equation with $h=\tau=1/16$ (see subsection \ref{subsec:2D_diff} for details). }
\label{fig:eig_dist}
\end{figure}

\section{Numerical experiments}
\label{sec:rslts}

In this section, we present three numerical examples, a 2D diffusion equation, a 2D convection-diffusion equation, and a differential Sylvester matrix equation, to show the performance of MSKP and GMRES with MSKP preconditioner (GMRES-MSKP) methods. As a comparison, we also give experimental results of KPS, GKPS, GMRES, and GMRES-GKPS methods. 
All computations are carried out using PyCharm 2020.2 on a Mac laptop with 2.3 GHz Quad Intel Core i5. 
All tests are started with zero vector. 
All iteration methods are terminated if the relative residual error satisfies $\mbox{RES}=\|\bm r^{(k)}\|_2/\|\bm r^{(0)}\|_2\leq 10^{-6}$, where $\bm r^{(k)}=\bm b-Q\bm u^{(k)}$ is the $k$-step residual. ``IT" and ``CPU" denote the required iterations and the CPU time (in seconds), respectively.
In the following calculations, the relatively optimal parameters of KPS, GKPS, and GMRES-GKPS methods are obtained by traversing approach, and the ones of MSKP and GMRES-MSKP methods are obtained by MTKL method.

We use the MTKL method with the kernel library $\mathcal{K}$ contains
linear, Gaussian, periodic kernels, and their multiplicative combinations.
Concretely, three basic kernel functions are
\begin{align*}
\mbox{Linear kernel:} ~~~& k_l(x,x')=\sigma^2_f(x-c)(x'-c),
\\
\mbox{Gaussian kernel:} ~~~& k_g(x,x')=\sigma^2_f\exp\left(-\frac{\|x-x'\|^2}{2\iota^2}\right),
\\
\mbox{Periodic kernel:} ~~~& 			k_p(x,x')=\sigma^2_f\exp\left(-\frac{2}{\iota^2}\sin^2\Big(\pi \frac{x-x'}{p}\Big) \right).
\end{align*}
The output variance $\sigma^2_f$ determines the average distance of the kernel function away from its mean.
The offset $c$ determines the $x$-coordinate of a point, at which the kernel function has zero variance. 
The lengthscale $\iota$ determines the length of the ‘wiggles'.
The period $p$ measures the distance between repititions of the kernel function. The multiplicative combination kernels are
$k_{ll} = k_l k_l$, $k_{lg}=k_l k_g$, $k_{lp}= k_l k_g$, and $k_{gp}= k_g k_p$.
Consequently, the kernel library is $\mathcal{K}=\{k_l, k_g, k_p, k_{ll}, k_{lg}, k_{lp}, k_{gp} \}$.

\subsection{2D diffusion equation}
\label{subsec:2D_diff}

We consider a 2D diffusion equation
\begin{equation}\label{eq:2Ddiffusion}
	u_t=u_{xx}+u_{yy}+f, \quad (x,y)\in [0,1]^2, ~~ t\in [0,1],
\end{equation}
with homogeneous Dirichlet boundary condition. The exact solution is
\begin{equation*}
	u(x,y,t)=\sin(5.25\pi t)xy(1-x)(1-y)
\end{equation*}
and $f$ is correspondingly determined. 
First, using centered difference scheme on an $n\times n$ uniform grid with mesh size $h=1/(n+1)$ in each unit square, we obtain an $n^2$-dimensional ODE \eqref{eq:ode}, in which $K=(I_n\otimes T_n+T_n\otimes I_n)/h^2$ and $M$ is an $n^2\times n^2$ identity matrix. $T_n=\text{tridiag}(-1,2,-1)\in \mathbb{R}^{n\times n}$. 
Further, we discrete this ODE by the fifth-order generalized Adams method (GAM-5) in BVM framework~\cite{brugnano1998solving} using uniform time grid on $[0, 1]$ with time step size $\tau=1/(m-1)$. 
Then we obtain the full discretize linear system as \eqref{eq:linear_system}, in which the coefficient matrices are
\begin{equation*}
\begin{aligned}
	&A=\begin{bmatrix}
		1\\
		-1 & 1 \\
		& -1 & 1\\
		& & \ddots & \ddots \\
		& & & -1 & 1\\
		& & & & -1 & 1\\
		& & & & & -1 & 1\\
	\end{bmatrix},\\
	&B=\frac{1}{720}
	\begin{bmatrix}
		0\\
		251 & 646 & -264 & 106 & -19\\
		-19 & 346 & 456 & -74 & 11\\
		& \ddots & \ddots & \ddots & \ddots & \ddots\\
		& & -19 & 346 & 456 & -74 & 11\\
		& & 11 & -74 & 456 & 346 & -19\\
		& & -19 & 106 & -264 & 646 & 251\\
	\end{bmatrix}.
\end{aligned}
\end{equation*}

\subsubsection{Predicting relatively optimal parameters}

We use MTKL approach to predict the splitting parameters $(\alpha,\beta,\omega)$ of MSKP method. Table \ref{tab:diff_training_set} gives the training, test, and retrained data sets of $(\alpha,\beta,\omega)$. 
Concretely, $(\alpha,\beta,\omega)$ in the training data set is produced by traversing approach as common matrix splitting methods done but for small scale linear systems, $m$ from 10 to 128 with different step size $\Delta m$. $\alpha$ and $\beta$ are obtained by traversing interval $(0,5]$ with a step size of 0.01. $\omega$ is obtained by traversing interval $[0,2)$ with the same step size. 
For the test data set, let $m$ from 1 to 500 with $\Delta m=1$. 
To predict the parameter more accurately and improve the generation ability, we put the predicted data into the training set to form the retrained data set. In this experiment for the retrained data set, we let $m$ from 128 to 500 with $\Delta m=30$.

\begin{table}[!hbpt]
\centering
\caption{Training, test, and retrained data sets $(\alpha, \beta, \omega)$ in MTKL approach for solving the 2D diffusion equation \eqref{eq:2Ddiffusion} with MSKP method ($n=16$).}
\begin{tabular}{|c|c|c|}
	\hline
	Training set & Test set & Retrained set\\
	\hline
	 $m: 10\thicksim 32,\quad \Delta m=2$& \multirow{3}*{$m: 1\thicksim 500,\quad  \Delta m=1$} & \multirow{3}*{$m: 128\thicksim 500,\quad  \Delta m=30$}\\
	 $m: 36\thicksim 80,~~ \Delta m=4$ & & \\
	 $m: 88\thicksim 128,~~ \Delta m=8$ & & \\
	\hline
\end{tabular}
\label{tab:diff_training_set}
\end{table}

\begin{figure}[hbtp]
	\centering
	\includegraphics[width=4.8cm]{./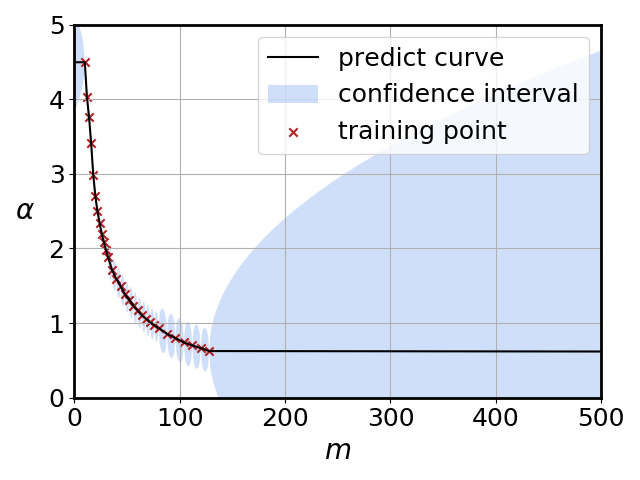}
	\includegraphics[width=4.8cm]{./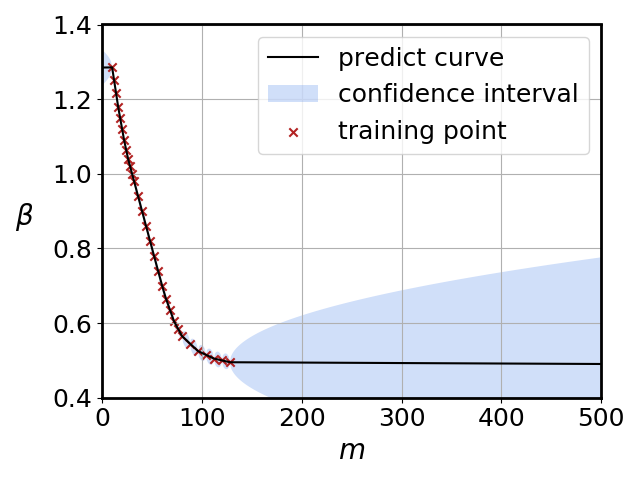}
	\includegraphics[width=4.8cm]{./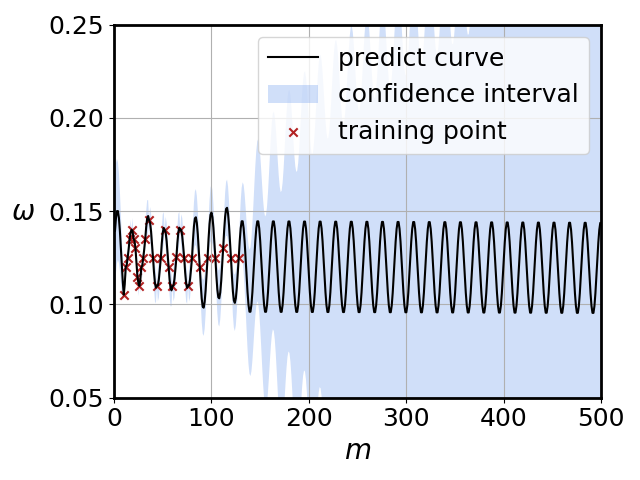}
	\includegraphics[width=4.8cm]{./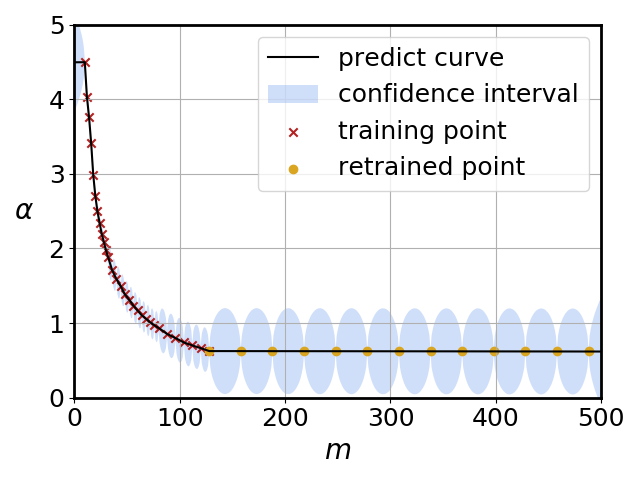}
	\includegraphics[width=4.8cm]{./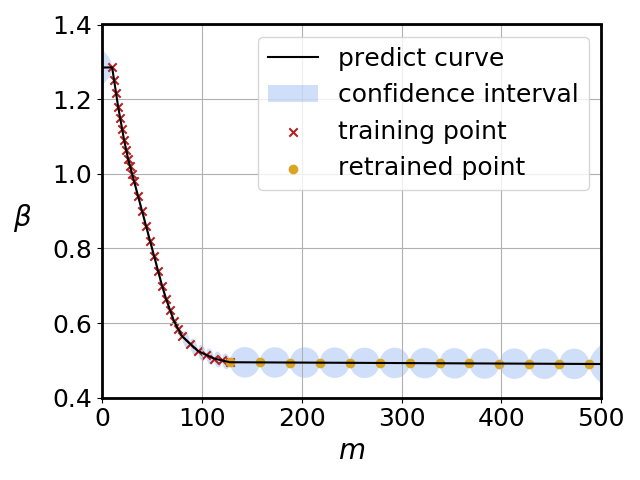}
	\includegraphics[width=4.8cm]{./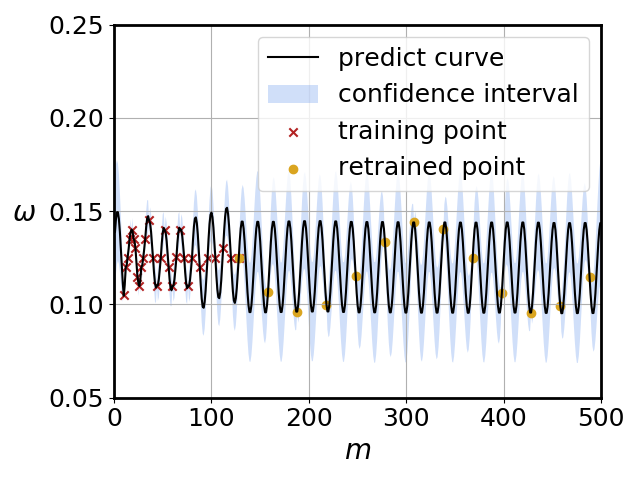}
	\caption{Regression curves of $(\alpha,\beta,\omega)$ against $m$ (fixed $n=16$) when solving the 2D diffusion equation \eqref{eq:2Ddiffusion} with MSKP method. Up: no retraining; Bottom: retraining.}
	\label{fig:diff_gpr_curve}
\end{figure}

In this case, the MTKL method considers a linear combination of
$k_{g}$, $k_p$, $k_{gp}$ in kernel library $\mathcal{K}$. Concretely, for the $l$-th task,
\begin{equation*}
	k_{l}(x,x')=c_{l1}k_g(x,x')+c_{l2}k_p(x,x')+c_{l3}k_{gp}(x,x'),\quad l=1,2,3.
\end{equation*}
Figure \ref{fig:diff_gpr_curve} shows the relative optimal parameter regression curve of $(\alpha, \beta, \omega)$ against $m$, 
and the retrained data set can shrink the confidence interval.
The optimized hyperparameters are
\begin{equation*}
	C=\begin{bmatrix}
		1.07 & 0.02 & 0.13\\
		0.93 & 0.01 & 0.06\\
		0.31 & 0.66 & 0.47\\
	\end{bmatrix},\quad
	K^t=\begin{bmatrix}
		15.91 & 0.00 & 0.00\\
		0.00 & 1.07 & 0.01\\
		0.00 & 0.01 & 1.22\\
	\end{bmatrix},
\end{equation*}
and $\iota = 139.60,~p=8.03$.
Note that, three kernels $k_g(x,x')$, $k_p(x,x')$, and $k_{gp}(x,x')$ in the combination all play important roles. For comparison, Figure \ref{fig:diff_gpr_curve_bad} demonstrates that when only periodic kernel is considered, the GRP method~\cite{jiang2022general} fails to predict accurate splitting parameter $\omega$.

\begin{figure}[hbtp]
	\centering
	\includegraphics[width=0.4\textwidth]{./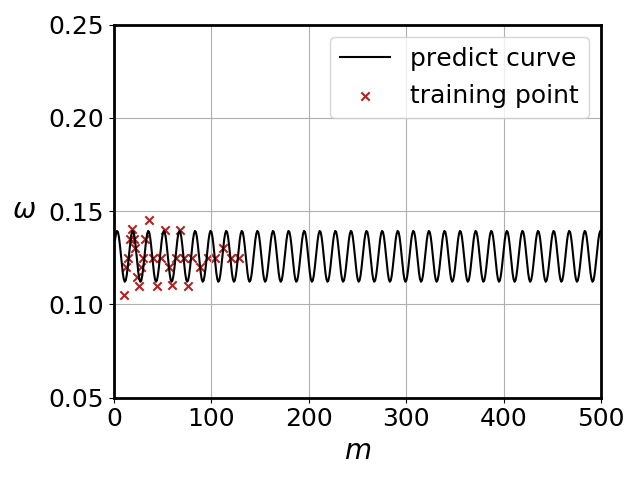}
	\caption{Regression curve of $\omega$ against $m$ (fixed $n=16$) when solving the 2D diffusion equation \eqref{eq:2Ddiffusion} with MSKP method. Here we use GRP method with only periodic kernel.}
	\label{fig:diff_gpr_curve_bad}
\end{figure}

\subsubsection{Comparison}

In this subsection, we compare our methods with KPS, GKPS, GMRES, and GMRES-GKPS methods for solving the 2D diffusion equation \eqref{eq:2Ddiffusion}. 
Table \ref{tab:diff_compare} compares the numerical results with different discrete resolution. 
MSKP method can achieve a convergence efficiency two to three times faster than KPS method, and is superior to GKPS method in terms of iteration steps and computational cost. 
Additionally, MSKP method as a preconditioner can better accelerate GMRES methods compared with GKPS method. As the dimension of system increases, the advantages of our approaches becomes significantly.
Table \ref{tab:diff_traversalcpu} shows the traversal CPU time of different methods when $h=\tau=1/64$. 
Due to the MTKL approach, MSKP and GMRES-MSKP methods do not consume traversal time in selecting relatively optimal parameters. 
However, the traversal time of KPS, GKPS, and GMRES-GKPS methods increases dramatically as the dimension of system becomes larger, resulting in an unaffordable computational cost.

\begin{table}[h]
\centering
\caption{Results of solving the 2D diffusion equation \eqref{eq:2Ddiffusion}.}
\resizebox{0.8\textwidth}{28mm}{
\begin{tabular}{|c|c|ccc|ccc|ccc|}
\hline
	\multirow{2}*{Method} & $\tau$ & \multicolumn{3}{c|}{1/16} & \multicolumn{3}{c|}{1/32} & \multicolumn{3}{c|}{1/64}\\
	\cline{2-11}
	& $h$ & 1/16 & 1/32 & 1/64 & 1/16 & 1/32 & 1/64 & 1/16 & 1/32 & 1/64\\
	\hline
	KPS
		& IT & 33 & 36 & 37 & 43 & 51 & 54 & 45 & 74 & 86\\
		& CPU & 0.67 & 1.58 & 5.56 & 1.28 & 3.93 & 14.56 & 1.94 & 8.20 & 37.79\\
	\hline
	GKPS
		& IT & 19 & 19 & 19 & 22 & 22 & 22 & 36 & 36 & 36\\
		& CPU & 0.38 & 0.83 & 2.89 & 0.64 & 1.71 & 5.94 & 1.55 & 4.02 & 15.98\\
	\hline
	\pmb{MSKP} 		
	    & \pmb{IT} & \pmb{15} & \pmb{15} & \pmb{15} & \pmb{18} & \pmb{19} & \pmb{20} & \pmb{23} & \pmb{26} & \pmb{27}\\ 
	& \pmb{CPU} & \pmb{0.29} & \pmb{0.66} & \pmb{2.25} & \pmb{0.52} & \pmb{1.47} & \pmb{5.40} & \pmb{0.99} & \pmb{2.86} & \pmb{11.91}\\
	\hline \hline
	GMRES		
		& IT & 138 & 336 & 813 & 167 & 401 & 966 & 214 & 511 & 1227\\
		& CPU & 0.32 & 2.55 & 40.09 & 0.51 & 11.81 & 152.48 & 1.18 & 30.31 & 611.77\\
	\hline
    GMRES-GKPS		
		& IT & 12 & 12 & 12 & 19 & 19 & 19 & 33 & 33 & 33\\
		& CPU & 0.18 & 0.42 & 2.31 & 0.57 & 1.37 & 7.26 & 2.15 & 4.29 & 25.03\\
	\hline 
	\pmb{GMRES-MSKP}		
		& \pmb{IT} & \pmb{11} & \pmb{11} & \pmb{12} & \pmb{16} & \pmb{17} & \pmb{18} & \pmb{21} & \pmb{24} & \pmb{25}\\ 
	& \pmb{CPU} & \pmb{0.16} & \pmb{0.39} & \pmb{2.28} & \pmb{0.48} & \pmb{1.22} & \pmb{6.83} & \pmb{1.37} & \pmb{3.12} & \pmb{18.94}\\
	\hline
\end{tabular}
}
\label{tab:diff_compare}
\end{table}

\begin{table}[h]
\centering
\caption{Traversal CPU in solving the 2D diffusion equation \eqref{eq:2Ddiffusion} when $h=\tau=1/64$.}
\resizebox{0.8\textwidth}{5mm}{
\begin{tabular}
{|c|c|c|c|c|c|c|}
	\hline
	Method & KPS & GKPS & \pmb{MSKP} & GMRES & GMRES-GKPS & \pmb{GMRES-MSKP}\\
	\hline
	Traversal CPU & 18025.83 & 5848.68 & \pmb{0} & 0 & 9160.98 & \pmb{0}\\
	\hline	
\end{tabular}
}
\label{tab:diff_traversalcpu}
\end{table}

\subsection{2D convection-diffusion equation}
\label{subsec:2DconvDiff}

The second example is a 2D convection diffusion equation
\begin{equation}\label{eq:2DconvDiff}
	u_t+u_x=u_{xx}+u_{yy}+f,\quad (x,y)\in [0,1]^2, ~~ t\in [0,1],
\end{equation}
with homogeneous Dirichlet boundary condition. $\bm{u}\in \mathbb{R}^{mn^2}$ is the unknown vector of discretizing $u(x,y,t)$. 
$\bm{f}\in \mathbb{R}^{mn^2}$ is the vector of discretizing $f(x,y,t)$, which determined by choosing the exact solution $\bm{u}_e = (1,1,\cdots,1)^{T}$. 
Using centered difference scheme on an $n\times n$ uniform grid with mesh size $h=1/(n+1)$ in each unit square, 
we obtain an $n^2$-dimensional ODE \eqref{eq:ode}, in which $K=I_n\otimes P_n+Q_n\otimes I_n$ and $M$ is an $n^2\times n^2$ identity matrix. 
$P_n=\text{tridiag}(-1, 2, -1)/h^2$ and $Q_n=\text{tridiag}(-1/2h-1/h^2,2/h^2,1/2h-1/h^2) \in \mathbb{R}^{n\times n}$. Then we apply GAM-5 on uniform time grid with a time step size $\tau=1/(m-1)$ to discrete this ODE. Finally, we obtain a linear system of the form as \eqref{eq:linear_system}.

\subsubsection{Predicting relatively optimal parameters}

We use MTKL approach to predict the splitting parameter $(\alpha,\beta,\omega)$ of MSKP method. Table \ref{tab:diff_conv_training_set} gives the training, test, and retrained data sets of $(\alpha,\beta,\omega)$. Again, $(\alpha,\beta,\omega)$ in the training data set is generated by traversing method, $m$ from 10 to 128 with different step size $\Delta m$. 
The traversing interval and step size of $\alpha$, $\beta$, and $\omega$ are the same as the first example. 
For the test data set, let $m$ from 1 to 500 with $\Delta m=1$. And for the retrained data set, let $m$ from 128 to 500 with $\Delta m=30$.
	
\begin{table}[h]
\centering
\caption{Training, test, and retrained data sets $(\alpha, \beta, \omega)$ in MTKL approach for solving the 2D convection-diffusion equation \eqref{eq:2DconvDiff} with MSKP method ($n=16$).}
\resizebox{0.8\textwidth}{9mm}{
\begin{tabular}{|c|c|c|}
\hline
	Training set & Test set & Retrained set\\
	\hline
	 $m: 10\thicksim 32,\quad \Delta m=2$& \multirow{3}*{$m: 1\thicksim 500,\quad  \Delta m=1$} & \multirow{3}*{$m: 128\thicksim 500,\quad  \Delta m=30$}\\
	 $m: 36\thicksim 80,\quad \Delta m=4$ & & \\
	 $m: 88\thicksim 128,\quad \Delta m=8$ & & \\
\hline
\end{tabular}
\label{tab:diff_conv_training_set}
}
\end{table}

In this case, the MTKL method uses a linear combination of
$k_{g}$, $k_p$, $k_{gp}$ to form the kernel library $\mathcal{K}$. Concretely, for the $l$-th task,
\begin{equation*}
	k_l(x,x')=c_{l1}k_g(x,x')+c_{l2}k_p(x,x')+c_{l3}k_{gp}(x,x'),\quad l=1,2,3.
\end{equation*}
Figure \ref{fig:diff_conv_gpr_curve} shows the relative optimal parameter regression curve of $(\alpha, \beta, \omega)$ against $m$. The optimized hyperparameters are
\begin{equation*}
	C=\begin{bmatrix}
		1.21 & 0.05 & 0.04\\
		1.17 & 0.03 & 0.02\\
		0.03 & 0.34 & 0.67\\
	\end{bmatrix},\quad
	K^t=\begin{bmatrix}
		0.21 & 0.41 & 8.38\\
		0.41 & 0.33 & 7.65\\
		8.38 & 7.65 & 1.01\\
	\end{bmatrix},
\end{equation*}
and $\iota = 99.54,~p=7.83$.
From Figure \ref{fig:diff_conv_gpr_curve}, it can be seen that using the retrained data set can shrink the confidence interval. This improves the prediction accuracy and strengthens the generalization ability of the regression model.

\begin{figure}[htb]
	\centering
	\includegraphics[width=5cm]{./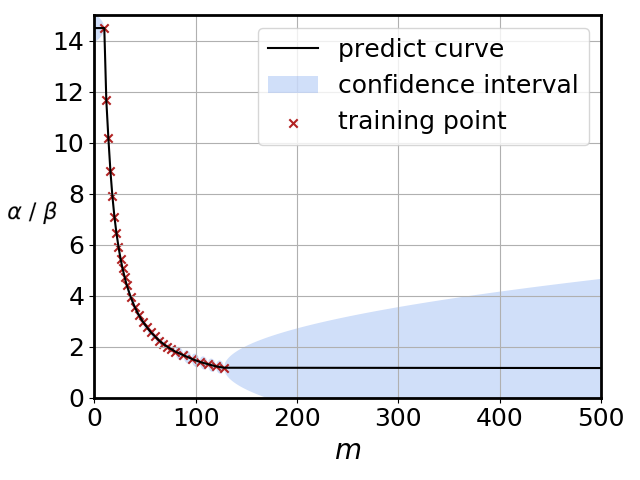}
	\includegraphics[width=5cm]{./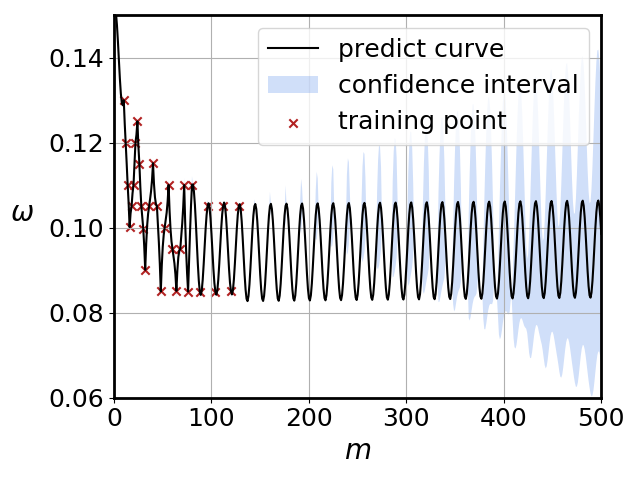}	
	\includegraphics[width=5cm]{./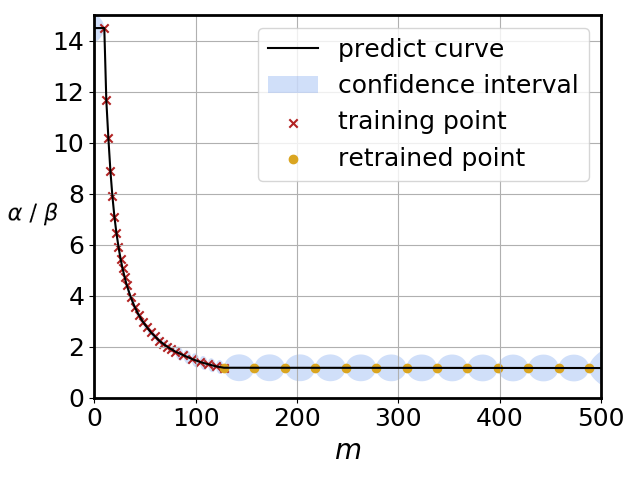}
	\includegraphics[width=5cm]{./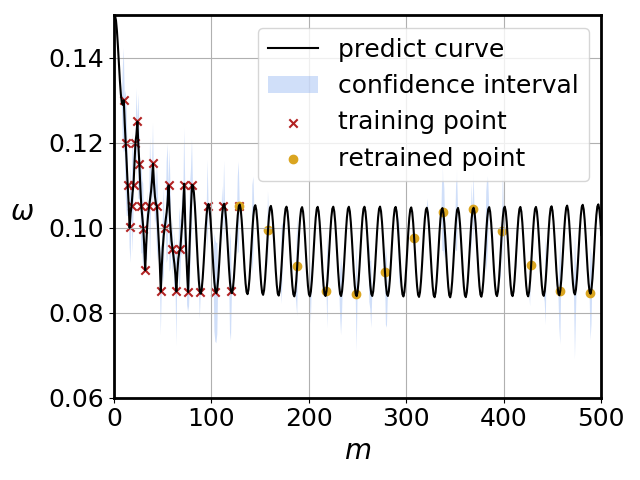}
	\caption{Regression curves of $(\alpha,\beta,\omega)$ against $m$ (fixed $n=16$) when solving 2D convection-diffusion equation \eqref{eq:2DconvDiff} with MSKP method. Up: no retraining; Bottom: retraining.}
	\label{fig:diff_conv_gpr_curve}
\end{figure}

\subsubsection{Comparison}

In this subsection, we compare our methods with KPS, GKPS, GMRES, and GMRES-GKPS methods for solving the 2D convection-diffusion equation \eqref{eq:2DconvDiff}. 
Table \ref{tab:diff_conv_compare} shows MSKP method has a better convergence performance than KPS and GKPS methods. 
In this experiment, $\beta$ in GKPS does not affect the performance, while $\omega$ in MSKP still has a great influence. 
Moreover, MSKP method as a preconditioner can accelerate GMRES method by tens of times.
As Table \ref{tab:diff_conv_traversalcpu} presents, MTKL method demonstrates an immense advantage in predicting the relatively optimal splitting parameters with large-scale systems. 

\begin{table}[h]
\centering
\caption{Results of solving the 2D convection-diffusion equation \eqref{eq:2DconvDiff}.}
\resizebox{0.8\textwidth}{23mm}{
\begin{tabular}{|c|c|ccc|ccc|ccc|}
\hline
	\multirow{2}*{Method} & $\tau$ & \multicolumn{3}{c|}{1/16} & \multicolumn{3}{c|}{1/32} & \multicolumn{3}{c|}{1/64}\\
	\cline{2-11}
	& $h$ & 1/16 & 1/32 & 1/64 & 1/16 & 1/32 & 1/64 & 1/16 & 1/32 & 1/64\\
	\hline
	KPS / GKPS & IT & 58 & 101 & 178 & 59 & 98 & 169 & 56 & 103 & 168\\
		& CPU & 0.87 & 2.56 & 11.21 & 1.54 & 4.95 & 22.69 & 2.69 & 10.01 & 46.74\\
	\hline
	\pmb{MSKP}		
		& \pmb{IT} & \pmb{43} & \pmb{68} & \pmb{108} & \pmb{44} & \pmb{70} & \pmb{111} & \pmb{45} & \pmb{73} & \pmb{115}\\
	& \pmb{CPU} & \pmb{0.66} & \pmb{1.73} & \pmb{6.81} & \pmb{1.15} & \pmb{3.48} & \pmb{14.87} & \pmb{2.16} & \pmb{7.16} & \pmb{31.93}\\
	\hline \hline
	GMRES 		
		& IT & 150 & 333 & 731 & 158 & 350 & 764 & 197 & 433 & 944\\
		& CPU & 0.41 & 3.22 & 33.66 & 0.59 & 6.80 & 82.85 & 1.26 & 11.43 & 216.34\\
	\hline
	GMRES-GKPS 		
		& IT & 25 & 25 & 26 & 30 & 34 & 45 & 46 & 51 & 60\\
		& CPU & 0.35 & 0.67 & 3.04 & 0.84 & 2.18 & 12.44 & 2.69 & 6.58 & 30.90\\
	\hline
	\pmb{GMRES-MSKP}		
		& \pmb{IT} & \pmb{22} & \pmb{24} & \pmb{24} & \pmb{27} & \pmb{31} & \pmb{33} & \pmb{32} & \pmb{37} & \pmb{40}\\
	& \pmb{CPU} & \pmb{0.31} & \pmb{0.64} & \pmb{2.76} & \pmb{0.75} & \pmb{1.98} & \pmb{9.18} & \pmb{1.87} & \pmb{4.77} & \pmb{21.08}\\
	\hline
\end{tabular}
}
\label{tab:diff_conv_compare}
\end{table}

\begin{table}[h]
\centering
\caption{Traversal CPU in solving the 2D convection-diffusion equation \eqref{eq:2DconvDiff} when $h=\tau=1/64$.}
\resizebox{0.8\textwidth}{5mm}{
\begin{tabular}{|c|c|c|c|c|c|}
	\hline
	Method & KPS/GKPS  & \pmb{MSKP} & GMRES & GMRES-GKPS & \pmb{GMRES-MSKP}\\
	\hline
	Traversal CPU & 23276.52  & \pmb{0} & 0 & 12324.91 & \pmb{0}\\
	\hline
\end{tabular}
}
\label{tab:diff_conv_traversalcpu}
\end{table}

\subsection{Differential Sylvester matrix equation}
\label{subsec:matrixeqn}

Finally, we consider a differential Sylvester matrix equation
\eqref{eq:linear_matrix_system},  
in which $\mathcal{A}$ and $\mathcal{B}$ obtained from the centered finite difference discretization of the operator $\mathcal{L}(u)=\Delta u+f_1(x,y)\frac{\partial u}{\partial x}+f_2(x,y)\frac{\partial u}{\partial y}+f(x,y)u$ on the unit square $[0,1]^2$ with homogeneous Dirichlet boundary condition. 
The number of inner grid points in each direction  is $n_0$, then the dimension of $\mathcal{A}$ and $\mathcal{B}$ is $n=n_0^2$. For this experiment, we extract the $\mathcal{A}$ and $\mathcal{B}$ from the Lyapack package \cite{penzl2000matlab} using the command $\text{fdm\_2d\_matrix}(n_0,x,y,0)$. $\mathcal{E},~ \mathcal{F}\in \mathbb{R}^{n\times 2}$ are obtained by the random values uniformly distributed on [0,1]. The initial condition is $X_0=\bm 0$. We discrete the differential Sylvester matrix equation \eqref{eq:linear_matrix_system} in temporal direction by GAM-5 with uniform time grid on $[0, 1]$ (time step size  $\tau=0.1$), and obtain a similar linear system of the form as \eqref{eq:linear_system}.

\subsubsection{Predicting relatively optimal parameters}

We use MTKL method to predict the relatively optimal splitting parameters $(\alpha,\beta,\omega)$ of MSKP scheme. Table \ref{tab:sylvester_training_set} gives the training, test, and retrained data sets of $(\alpha,\beta,\omega)$. $(\alpha,\beta,\omega)$ in the training data set is obtained by a traversing way for small scale linear systems, $n$ from 16 to 225 with different step size. For the test data set, let $n$ from 1 to 1200 with $\Delta n=1$. And for the retrained data set, let $n$ from 250 to 1200 with $\Delta n=50$.

\begin{table}[h]
\centering
\caption{Training, test, and retrained data sets $(\alpha, \beta, \omega)$ in MTKL approach for solving the differential Sylvester matrix equation \eqref{eq:linear_matrix_system} with MSKP method ($\tau=0.1$).}
\resizebox{0.8\textwidth}{8mm}{
\begin{tabular}{|c|c|c|}
\hline
	Training set & Test set & Retrained set\\
	\hline
	 $n=\gamma^2,\gamma\in \Gamma$& \multirow{2}*{$n: 1\thicksim 1200,  \Delta n=1$} & \multirow{2}*{$n: 250\thicksim 1200,  \Delta n=50$}\\
	 $\Gamma =\{\gamma|\gamma\in \mathbb{Z}, 4\leq \gamma \leq 15 \} $ & & \\
\hline
\end{tabular}
}
\label{tab:sylvester_training_set}
\end{table}

In this case, the kernel library $\mathcal{K}$ in MTKL method is a linear combination of $k_{g}$, $k_l$, $k_{gl}$. For the $l$-th task,
\begin{equation*}
	k_{l}(x,x')=c_{l1}k_g(x,x')+c_{l2}k_l(x,x')+c_{l3}k_{gl}(x,x'),\quad l=1,2,3.
\end{equation*}
Figure \ref{fig:sylvester_gpr_curve} shows the relative optimal parameter regression curve of $(\alpha, \beta, \omega)$ against $m$. The optimized hyperparameters are
\begin{equation*}
	C=\begin{bmatrix}
		0.89 & 0.05 & 0.24\\
		0.99 & 0.11 & 0.07\\
		0.32 & 0.26 & 0.79\\
	\end{bmatrix},\quad
	K^t=\begin{bmatrix}
		1.23 & 0.03 & 0.01\\
		0.03 & 1.14 & 0.12\\
		0.01 & 0.12 & 0.97\\
	\end{bmatrix},\quad \iota = 1.15.
\end{equation*}

\begin{figure}[hbtp]
	\centering
	\includegraphics[width=4.8cm]{./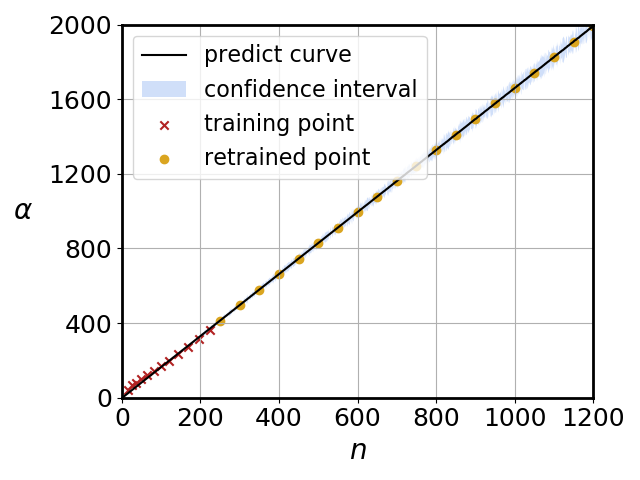}
	\includegraphics[width=4.8cm]{./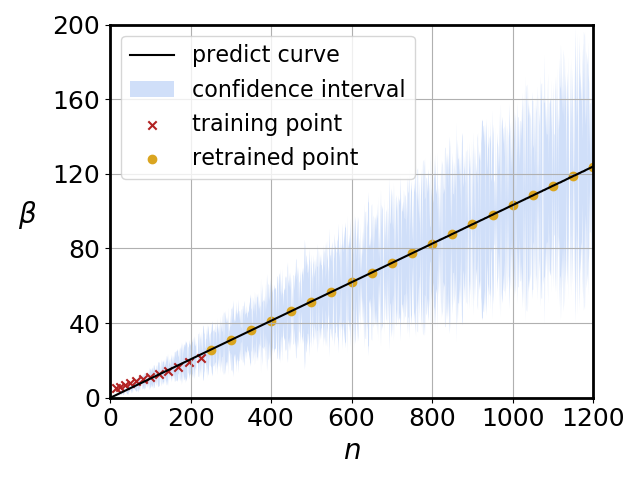}
	\includegraphics[width=4.8cm]{./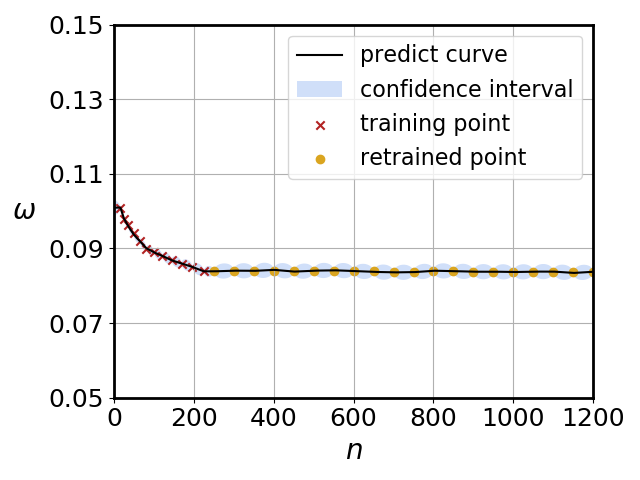}
	\caption{Regression curves of $(\alpha,\beta,\omega)$ against $n$ (fixed $\tau=0.1$) when solving the differential Sylvester matrix equation \eqref{eq:linear_matrix_system} with MSKP method.}
	\label{fig:sylvester_gpr_curve}
\end{figure}

\subsubsection{Comparison}

We compare our proposed methods with KPS, GKPS, and GMRES approaches for solving the differential Sylvester matrix equation \eqref{eq:linear_matrix_system}. Table \ref{tab:sylvester_compare} shows corresponding numerical results, which present that our approach has a significant advantage for solving linear matrix systems. 
When the scale of system is larger than one hundred thousand, MSKP method as a preconditioner can speed up tens to hundreds times over GMRES.

\begin{table}[h]
\centering
\caption{Results of solving the differential Sylvester matrix equation \eqref{eq:linear_matrix_system} with $\tau=0.1$.}
\resizebox{0.8\textwidth}{22mm}{
\begin{tabular}{|c|c|c|c|c|c|c|c|c|}
\hline
	\multirow{2}*{$n^2$} & \multicolumn{2}{c|}{KPS} & \multicolumn{2}{c|}{GKPS} &  \multicolumn{2}{c|}{\pmb{MSKP}} & \multicolumn{2}{c|}{GMRES} \\
	\cline{2-9}
	& IT & CPU & IT & CPU & IT & CPU & IT & CPU \\
	\hline
	256	& 19 & 0.29 & 9 & 0.14 & \pmb{4} & \pmb{0.06} & 70 & 0.11 \\
	\hline
	1296 & 27 & 0.62 & 9 & 0.21 & \pmb{4} & \pmb{0.09} & 114 & 0.56 \\
	\hline
	4096 & 40 & 2.47 & 9 & 0.53 & \pmb{4} & \pmb{0.23} & 160 & 2.35 \\
	\hline
	20736 & 68 & 28.76 & 9 & 3.74 & \pmb{4} & \pmb{1.65} & 256 & 13.56 \\
	\hline
	65536 & 101 & 146.21 & 9 & 12.98 & \pmb{4} & \pmb{5.75} & 362 & 193.77 \\
	\hline
	390625 & 173 & 972.26 & 9 & 106.77 & \pmb{4} & \pmb{45.62} & 491 & 2597.43 \\
	\hline
	1048576 & 259 & 6011.62 & 9 & 619.08 & \pmb{4} & \pmb{247.67} &  & $>30000$ \\
	\hline
\end{tabular}
}
\label{tab:sylvester_compare}
\end{table}

\section{Conclusion and future work}
\label{sec:conclusion}

This paper has developed a new parameter selection method for matrix splitting iteration methods.
Concretely, we present the MTKL method to address the problems of multi-parameter optimization and kernel selection. 
Moreover, we propose a new matrix splitting iteration scheme, MSKP method,  
to solving the TDLSs \eqref{eq:time-dependent_linear_system}. We give its convergence analysis and preconditioning strategy. 
To demonstrate the efficiency of our approaches, we solve a 2D diffusion equation, a 2D convection-diffusion equation, and a differential Sylvester matrix equation. 
Numerical results illustrate MTKL method has a huge advantage in predicting the relatively optimal splitting parameters, especially for large-scale systems. 
Further, the MSKP method as a preconditioner can effectively accelerate GMRES. 
Especially for solving the differential Sylvester matrix equation, the speedup ratio can reach tens to hundreds of times when the scale of the system is larger than one hundred thousand. 

There are still lots of works worthy of our further study.
For instance, the first one is to give the convergence rate analysis of ADI methods. 
The second interesting work is to use machine learning to train iteration schemes and splitting parameters that are consistent with concrete problems. 
The third challenge work is to develop our methods for solving nonlinear systems.

\section*{Acknowledgments}
This work is supported in part by National Natural Science Foundation of China (12171412), Natural Science Foundation for Distinguished Young Scholars of Hunan Province (2021JJ10037), Hunan Youth Science and Technology Innovation Talents Project (2021RC3110), the Key Project of Education Department of Hunan Province (21A0116),  Hunan Provincial Innovation Foundation for Postgraduate (CX20220647).


\end{document}